\documentclass[a4paper,twoside,12pt]{article}
\usepackage{amssymb,amsmath,amsthm,amsfonts,latexsym}
\usepackage{graphicx}
\usepackage[pdftex,bookmarks,colorlinks=false]{hyperref}
\usepackage{caption,subcaption}
\usepackage[hmargin=1.2in,vmargin=1.2in]{geometry}
\usepackage[auth-sc-lg,affil-sl]{authblk}
\setcounter{Maxaffil}{3}

\pagestyle{myheadings}
\thispagestyle{empty}
\markboth {\hspace*{-9mm} \centerline{\footnotesize 
		A Study on Integer Additive Set-Graceful Graphs}
}
{ \centerline {\footnotesize 
		N. K. Sudev and  K. A. Germina} \hspace*{-9mm}}

\newtheorem{theorem}{Theorem}[section]
\newtheorem{corollary}[theorem] {Corollary}
\newtheorem{definition}[theorem]{Definition}

\newtheorem{property}[theorem]{Property}
\newtheorem{problem}[theorem]{Problem}
\newtheorem{proposition}[theorem]{Proposition}
\newtheorem{remark}[theorem]{Remark}

\def\ni{\noindent}
\def\N{\mathbb{N}_0}
\def\cP{\mathcal{P}}

\title{\sc A Study on Integer Additive Set-Graceful Graphs}
\author{N. K. Sudev}
\affil{\small Department of Mathematics\\ Vidya Academy of Science \& Technology \\ Thalakkottukara, Thrissur, India.\\ E-mail: sudevnk@gmail.com}

\author{K. A. Germina}
\affil{\small Department of Mathematics\\ University of Botswana \\Gaborone, Botswana.\\ E-mail: srgerminaka@gmail.com}

\date{}

\begin{document}
\maketitle

\begin{abstract}
A set-labeling of a graph $G$ is an injective set-valued function $f:V(G)\to \cP(X)$, where $X$ is a finite set and $\cP(X)$ be its power set. A set-indexer of $G$ is  a set-labeling such that the induced function $f^{\oplus}:E(G)\to \cP(X)-\{\emptyset\}$ defined by $f^{\oplus}(uv) = f(u){\oplus}f(v)$ for every $uv{\in} E(G)$ is also injective. Let $G$ be a graph and let $X$ be a non-empty set. A set-indexer $f:V(G)\to \cP(X)$  is called a set-graceful labeling of $G$ if $f^{\oplus}(E(G))=\cP(X)-\{\emptyset\}$. A graph $G$ which admits a set-graceful labeling is called a set-graceful graph. An integer additive set-labeling is an injective function $f:V(G)\to \cP(\N)$, $\N$ is the set of all non-negative integers and an integer additive set-indexer is an integer additive set-labeling such that the induced function $f^+:E(G) \to \cP(\N)$ defined by $f^+ (uv) = f(u)+ f(v)$ is also injective. In this paper, we introduce the notion of integer additive set-graceful labeling of graphs analogous to the set-graceful labeling of graphs and study certain properties and characteristics of the graphs which satisfy this type of set-labeling.
\end{abstract}
\textbf{Key words}: Integer additive set-indexed graphs, set-graceful graphs, integer additive set-graceful graphs.

\vspace{0.2cm}

\noindent \textbf{AMS Subject Classification : 05C78}

\section{Introduction}

For all  terms and definitions, not defined specifically in this paper, we refer to \cite{BM1}, \cite{BLS}, \cite{FH} and \cite{DBW}. For more about graph labeling, we refer to \cite{JAG}. Unless mentioned otherwise, all graphs considered here are simple, finite and have no isolated vertices. 

The researches on graph labeling problems commenced with the introduction of $\beta$-valuations of graphs in \cite{AR}. Analogous to the number valuations of graphs, the concepts of set-labelings and set-indexers of graphs are introduced in \cite{A1} as follows.

Let $X$ be a non-empty set and $\cP(X)$ be its power set. A \textit{set-labeling} of a graph $G$ with respect to the set $X$ is an injective set valued function $f:V(G)\to \cP(X)$ with the induced edge function $f^+(E(G))\to \cP(X)$ is defined by $f^+(uv)=f(u)\oplus f(v)$ for all $uv\in E(G)$, where $\cP(X)$ is the set of all subsets of $X$ and $\oplus$ is the symmetric difference of sets. A set-labeling of $G$ is said to be a \textit{set-indexer} of $G$ if the induced edge function $f^+$ is also injective. A graph which admits a set-labeling (or a set-indexer) is called a set-labeled (or set-indexed) graph. It is proved in \cite{A1} that every graph has a set-indexer.

A set-indexer $f:V(G)\to \cP(X)$  of a given graph $G$ is said to be a {\em set-graceful labeling} of $G$ if $f^{\oplus}(E(G))=\cP(X)-\{\emptyset\}$. A graph $G$ which admits a set-graceful labeling is called a {\em set-graceful graph}.

The sumset of two non-empty sets $A$ and $B$, denoted by $A+B$, is the set defined by $A+B=\{a+b:a\in A, b\in B\}$. If either $A$ or $B$ is countably infinite, then their sumset $A+B$ is also countably infinite. Hence all sets we consider here are non-empty finite sets. If $C=A+B$, where $A\neq \{0\}$ and $B\neq\{0\}$, then $A$ and $B$ are said to be the \textit{non-trivial summands} of the set $C$ and $C$ is said to be a \textit{non-trivial sumset} of $A$ and $B$. Using the concepts of sumsets, the notion of integer additive set-labeling of a given graph $G$ is defined as follows. 

\begin{definition}{\rm 
Let $\N$ be the set of all non-negative integers. Let $X\subseteq \N$ and $\cP(X)$ be its power set. An {\em integer additive set-labeling} (IASL, in short) of a graph $G$ is an injective function $f:V(G)\to \cP(X)$ whose associated function $f^+:E(G)\to \cP(X)$ is defined by $f^+(uv)=f(u)+f(v), uv\in E(G)$. A graph $G$ which admits an integer additive set-labeling is called an integer additive set-labeled graph. (IASL-graph).}
\end{definition} 

\begin{definition}{\rm 
\cite{GA}, \cite{GS1} An {\em integer additive set-labeling} $f$ is said to be an \textit{integer additive set-indexer} (IASI, in short) if the induced edge function $f^+:E(G) \to \cP(X)$ defined by $f^+ (uv) = f(u)+ f(v)$ is also injective.  A graph $G$ which admits an integer additive set-indexer is called an \textit{integer additive set-indexed graph}.}
\end{definition} 

By an element of a graph $G$, we mean a vertex or an edge of $G$. The cardinality of the set-label of an element of a graph $G$ is called the {\em set-indexing number} of that element. An IASL (or an IASI) is said to be a $k$-uniform IASL (or $k$-uniform IASI) if $|f^+(e)|=k ~ \forall ~ e\in E(G)$, where $k$ is a positive integer. The vertex set $V(G)$ is called {\em $l$-uniformly set-indexed}, if all the vertices of $G$ have the set-indexing number $l$.

With respect to an integer additive set-labeling (or integer additive set-indexer) of a graph $G$, the vertices of $G$ has non-empty set-labels and the set-labels of every edge of $G$ is the sumsets of the set-labels of its end vertices and hence no element of a given graph can have $\emptyset$ as its set-labeling. Hence, we need to consider only non-empty subsets of $X$ for set-labeling the vertices or edges of $G$.  Hence, all sets we mention in this paper are finite sets of non-negative integers. We denote the cardinality of a set $A$ by $|A|$. We denote, by $X$, the finite ground set of non-negative integers that is used for set-labeling the vertices or edges of $G$ and cardinality of $X$ by $n$. 

In this paper, analogous to the set-graceful labeling of graphs, we introduce the notion of an integer additive  set-graceful labeling of a given graph $G$ and study its properties.

\section{Integer Additive Set-Graceful Graphs}

Certain studies have been done on set-graceful graphs in \cite{A1}, \cite{A2}, \cite{AGPR} and \cite{AGKS}. Motivated by these studies we introduce the notion of a graceful type of integer additive set-labeling as follows. 

\begin{definition}{\rm
Let $G$ be a graph and let $X$ be a non-empty set of non-negative integers. An integer additive set-indexer $f:V(G)\to \cP(X)-\{\emptyset\}$  is said to be an {\em integer additive set-graceful labeling} (IASGL) or a {\em graceful integer additive set-indexer} of $G$ if  $f^{+}(E(G))=\cP(X)-\{\emptyset,\{0\}\}$. A graph $G$ which admits an integer additive set-graceful labeling is called an {\em integer additive set-graceful graph} (in short, IASG-graph).}
\end{definition}

As all graphs do not admit graceful IASIs in general, studies on the structural properties of IASG-graphs arouse much interest. The choice of set-labels of the vertices or edges of $G$ and the corresponding ground set is very important to define a graceful IASI for a given graph. 

A major property of integer additive set-graceful graphs is established as follows.

\begin{property}\label{P-IASGL0a}
If $f:V(G)\to \cP(X)-\{\emptyset\}$ is an integer additive set-graceful labeling on a given graph $G$, then $\{0\}$ must be a set-label of one vertex of $G$.
\end{property}
\begin{proof}
If possible, let $\{0\}$ is not the set-label of a vertex in $G$. Since $X$ is a non-empty subset of the set $\N$ of non-negative integers, it contains at least one element, say $x$, which is not the sum of any two elements in $X$. Hence, $\{x\}$ can not be the set-label of any edge of $G$. This is a contradiction to the hypothesis that $f$ is an integer additive set-graceful labeling. 
\end{proof}

Examining the above mentioned property of IASG-graphs, it can be understood that if an IASL $f:V(G)\to \cP(X)-\{\emptyset\}$ of a given graph $G$ is an integer additive set-graceful labeling on $G$, then the ground set $X$ must contain the element $0$. That is, only those sets containing the element $0$ can be chosen as the ground sets for defining a graceful IASI of a given graph. 

Another trivial but an important property of certain set-labels of vertices of an IASG-graph $G$ is as follows.

\begin{property}\label{O-IASGL1}
Let $f:V(G)\to \cP(X)-\{\emptyset\}$ be an integer additive set-graceful labeling on a given graph $G$. Then, the vertices of $G$, whose set-labels are not the non-trivial sumsets of any two subsets of $X$, must be adjacent to the vertex $v$ that has the set-label $\{0\}$.
\end{property}
\begin{proof}
Let $A_i\neq \emptyset$ be a subset of $X$ that is not a non-trivial sumset of any two subsets of $X$. But since $f$ is an IASGL of $G$, $A_i$ must be the set-label of some edge of $G$. This is possible only when $A_i$ is the set-label of some vertex $v_i$ that is adjacent to the vertex $v$ whose set-label is $\{0\}$.
\end{proof}

\ni Invoking Property \ref{O-IASGL1}, we have the following remarks.

\begin{remark}\label{O-IASGL1a}{\rm
Let $x_i~\in ~X$ be not the sum of any two elements in $X$. Since, $f$ is an integer additive set-graceful labeling, $\{x_i\}$ must be the set-label of one edge, say $e$ of $G$. This is possible only when one end vertex of $e$ has the set-label $\{0\}$ and the other end vertex has the set-label $\{x_i\}$.}
\end{remark}

\begin{remark}\label{C-IASGL1a}{\rm 
Let $f:V(G)\to \cP(X)-\{\emptyset\}$ be an integer additive set-graceful labeling on a given graph $G$ and let $x_1$ and $x_2$ be the minimal and second minimal non-zero elements of $X$. Then, by Remark \ref{O-IASGL1a}, the vertices of $G$ that have the set-labels $\{x_1\}$ and $\{x_2\}$, must be adjacent to the vertex $v$ that has the set-label $\{0\}$.}
\end{remark}

\begin{property}\label{O-IASGL2a}
Let $A_i$ and $A_j$ are two distinct subsets of the ground set $X$ and let $x_i$ and $x_j$ be the maximal elements of $A_i$ and $A_j$ respectively. Then, $A_i$ and $A_j$ are the set-labels of two adjacent vertices of an IASG-graph $G$ is that $x_i+x_j\le x_n$, the maximal element of $X$.
\end{property}
\begin{proof}
Let $v$ be a vertex of $G$ that has a set-label $A_i$ whose maximal element $x_i$. If $v$ is adjacent to a vertex, say $u$, with a set-label $A_j$ whose maximal element is $x_j$, then $f^+(uv)$ contains the element $x_i+x_j$. Therefore, $x_i+x_j~\in ~ X$. Hence, $x_i+x_j\leq x_n$.
\end{proof}

\ni In view of Property, we can observe the following remarks.

\begin{remark}\label{O-IASGL2}{\rm 
Let $f:V(G)\to \cP(X)-\{\emptyset\}$ be an integer additive set-graceful labeling on a given graph $G$ and let $x_n$ be the maximal element of $X$. If $A_i$ and  $A_j$ are set-labels of two adjacent vertices, then $A_i+A_j$ is the set-label for the edge. Hence for any $x_i\in A_i , x_j\in A_j$, $x_i+x_j\le x_n$ and hence, if one of the set is having the maximal element, then the other can not have a non-zero element. Hence, $x_n$ is an element of the set-label of a vertex $v$ of $G$ if $v$ is a pendant vertex that is adjacent to the vertex labeled by $\{0\}$. It can also be noted that if $G$ is a graph without pendant vertices, then no vertex of $G$ can have a set-label consisting of the maximal element of the ground set $X$.}
\end{remark}

The following results establish the relation between the size of an IASG-graph and the cardinality of its ground set.

\begin{remark}\label{R-IASGL3}{\rm 
Let $f$ be an integer additive set-graceful labeling defined on $G$. Then, $f^+(E)=\cP(X)-\{\emptyset,\{0\}\}$. Therefore, $|E(G)|=| \cP(X)|-2 = 2^{|X|}-2 =2(2^{|X|-1}-1)$. That is, $G$ has even number of edges.}  
\end{remark}

\begin{remark}\label{R-IASGL4}{\rm 
Let $G$ be an IASG-graph, with an integer additive set-graceful labeling $f$. By Remark \ref{R-IASGL3}, $|E(G)|=2^{|X|}-2$. Therefore, the cardinality of the ground set $X$ is $|X|=\log_2[|E(G)|+2]$.}
\end{remark}

The conditions for certain graphs and graph classes to admit a integer additive set-graceful labeling are established in following discussions.

\begin{theorem}\label{T-IASGL5}
A star graph $K_{1,m}$ admits an integer additive set-graceful labeling if and only if $m=2^n-2$ for any integer $n>1$.
\end{theorem}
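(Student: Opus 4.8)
The plan is to prove both implications, with the forward (necessity) direction following almost immediately from the counting observation already recorded in Remark~\ref{R-IASGL3}, and the reverse (sufficiency) direction handled by an explicit construction that exploits the special structure of the star.

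For necessity, I would suppose that $K_{1,m}$ admits an integer additive set-graceful labeling with ground set $X$ and write $n=|X|$. Since $K_{1,m}$ has exactly $m$ edges, Remark~\ref{R-IASGL3} forces $m=|E(K_{1,m})|=2^{n}-2$. It then remains only to pin down the range of $n$: a star has at least one edge, so $m\ge 1$, which rules out $n=1$ (that would give $m=0$) and hence $n\ge 2$, i.e.\ $n>1$. This disposes of the ``only if'' part.

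For sufficiency, I would suppose $m=2^{n}-2$ for some integer $n>1$ and exhibit a graceful IASI directly. Take any $n$-element ground set $X\subseteq\N$ with $0\in X$ (for definiteness $X=\{0,1,2,\dots,n-1\}$). Guided by Property~\ref{P-IASGL0a}, assign the label $\{0\}$ to the central vertex $c$. The key structural point is that every edge of $K_{1,m}$ is incident with $c$, so for each leaf $v$ the corresponding edge carries the label $f(c)+f(v)=\{0\}+f(v)=f(v)$; thus on a star the induced edge labels coincide with the leaf labels. Now $\cP(X)-\{\emptyset,\{0\}\}$ has exactly $2^{n}-2=m$ elements, so I would assign to the $m$ leaves, bijectively, the $m$ distinct sets of $\cP(X)-\{\emptyset,\{0\}\}$.

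Finally I would verify the three requirements. Injectivity of $f$ on vertices holds because the leaf labels are pairwise distinct and none equals the centre's label $\{0\}$ (which was deliberately excluded from the leaf labels). Injectivity of $f^{+}$ follows since the edge labels equal the pairwise distinct leaf labels, so $f$ is indeed an IASI. Gracefulness holds because the bijection gives $f^{+}(E(K_{1,m}))=\cP(X)-\{\emptyset,\{0\}\}$ exactly. I do not anticipate a genuine obstacle here; the only step requiring care is the recognition that labelling the centre with $\{0\}$ collapses each sumset $f(c)+f(v)$ to $f(v)$, which is precisely what neutralises the constraint of Property~\ref{O-IASGL2a} and makes an arbitrary exhaustive assignment of the nonempty, non-$\{0\}$ subsets to the leaves admissible.
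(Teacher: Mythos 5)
Your proposal is correct and follows essentially the same route as the paper: necessity via the edge-count identity $|E(G)|=2^{|X|}-2$ from Remark~\ref{R-IASGL3}, and sufficiency by placing $\{0\}$ on the centre and distributing the remaining $2^{n}-2$ non-empty subsets of $X$ over the leaves. Your explicit verification that $\{0\}+f(v)=f(v)$ collapses the edge labels onto the leaf labels is a welcome elaboration of the paper's ``clearly,'' but it is the same construction.
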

\begin{proof}
Let $v$ be the vertex of degree $d(v)>1$. Let $m=2^n-2$ and $\{v_1,v_2,\ldots,v_m\}$, be the vertices in $K_{1,m}$ which are adjacent to $v$. Let $X$ be a set of non-negative integers containing $0$. 

First, assume that $K_{1,m}$ admits an integer additive set-graceful labeling, say $f$. Then, by Remark \ref{R-IASGL3}, $|E(G)|=m=2^{|X|}-2$. Therefore, $m=2^n-2$, where $n=|X|>1$.

Conversely, assume that $m=2^n-2$ for some integer $n>1$. Label the vertex $v$ by the set $\{0\}$ and label the remaining $m$ vertices of $K_{1,m}$ by the remaining $m$ distinct non-empty subsets of $X$. Clearly, this labeling is an integer additive set-graceful labeling for $K_{1,m}$. This completes the proof.
\end{proof}

The following theorem checks whether a tree can be an IASG-graph. 

\begin{proposition}\label{P-IASGL6}
If a tree on $m$ vertices admits an integer additive set-graceful labeling, then $1+m=2^n$, for some positive integer $n>1$.
\end{proposition}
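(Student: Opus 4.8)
The plan is to combine the defining edge-count of trees with the edge-count forced by a graceful IASI, which was already recorded in Remark \ref{R-IASGL3}. If $T$ is a tree on $m$ vertices, then by the standard fact that a tree on $m$ vertices has exactly $m-1$ edges, we have $|E(T)| = m-1$. On the other hand, if $T$ admits an integer additive set-graceful labeling $f$ with ground set $X$, then $f^{+}(E(T)) = \cP(X) - \{\emptyset, \{0\}\}$, so by Remark \ref{R-IASGL3} we must have $|E(T)| = 2^{|X|} - 2$.

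The heart of the argument is then simply to equate these two expressions. Writing $n = |X|$, I would set $m - 1 = 2^{n} - 2$, which rearranges at once to $1 + m = 2^{n}$. This already delivers the claimed equation, with $n$ equal to the cardinality of the ground set used for the labeling.

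It remains to justify the side condition $n > 1$. Since the tree admits a graceful IASI, its edge set is nonempty (a tree on a single vertex has no edges and cannot carry such a labeling), so $|E(T)| = 2^{n} - 2 \geq 1$, forcing $2^{n} \geq 3$ and hence $n \geq 2$. Equivalently, one may note that $\cP(X) - \{\emptyset, \{0\}\}$ must be nonempty to serve as an image of $f^{+}$, so $|\cP(X)| = 2^{n} > 2$, again giving $n > 1$.

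I do not expect any genuine obstacle here: the result is a direct counting identity, and the only point requiring a moment's care is confirming that $n>1$ rather than merely $n\ge 1$, which follows from the nonemptiness of the edge set. The substantive content has effectively been front-loaded into Remark \ref{R-IASGL3}, so the proposition is essentially a corollary of that remark specialized to the tree edge-count $m-1$.
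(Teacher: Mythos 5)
Your argument is correct and is essentially identical to the paper's own proof: both equate the tree edge-count $m-1$ with the count $2^{|X|}-2$ from Remark \ref{R-IASGL3} and rearrange to $1+m=2^{|X|}$. Your extra justification that $n>1$ is a small, harmless addition that the paper leaves implicit.
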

\begin{proof}
Let $G$ be a tree on $m$ vertices. Then, $|E(G)|=m-1$. Assume that $G$ admits an integer additive set-graceful labeling, say $f$. Then, by Remark \ref{R-IASGL3}, for a ground set $X$ of cardinality $n$, then $m-1=2^{|X|}-2$. Hence, $m+1=2^{|X|}$.
\end{proof}

\begin{corollary}\label{C-IASGL6a}
Let $G$ be a tree on $m$ vertices. For a ground set $X$, let $f:V(G)\to \cP(X)$ be be an integer additive set-graceful labeling on $G$. Then, $|X|=\log_2(m+1)$.
\end{corollary}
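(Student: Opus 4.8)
The plan is to derive this directly from Proposition \ref{P-IASGL6}, since the two statements are really just two algebraic forms of the same relationship between the order of the tree and the size of the ground set. The substantive work — counting edges via Remark \ref{R-IASGL3} and using the tree identity $|E(G)|=m-1$ — has already been carried out in establishing the proposition, so what remains is purely a matter of rearranging the resulting equation.

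First I would invoke Proposition \ref{P-IASGL6}: since $G$ is a tree on $m$ vertices admitting an integer additive set-graceful labeling $f$ with respect to the ground set $X$, the proof of that proposition yields the exact identity
\[
m+1 = 2^{|X|},
\]
where $|X|=n$. Note that this is an equality, not merely the existence statement $1+m=2^n$ for some $n$; the exponent is pinned down to be precisely the cardinality of the chosen ground set $X$, because Remark \ref{R-IASGL3} gives $|E(G)|=2^{|X|}-2$ with $|X|$ the cardinality of the specific set over which $f$ is defined.

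Next I would take the logarithm to base $2$ of both sides of $m+1=2^{|X|}$. Since the exponential function $t\mapsto 2^t$ is a bijection from the positive integers onto the powers of $2$, and since $m+1\geq 2$ is guaranteed to be such a power of $2$, the logarithm is well defined and integer-valued here; applying it gives $\log_2(m+1)=|X|$, which is the claimed formula.

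There is no genuine obstacle in this argument: the only thing one must be careful about is ensuring that $m+1$ is actually a power of $2$ before writing $\log_2(m+1)$, but this is exactly the content of Proposition \ref{P-IASGL6}, so the expression is legitimate rather than a mere formal manipulation. I would close by remarking that the corollary simply restates the constraint $m+1=2^{|X|}$ in a form that reads off $|X|$ as a function of the order $m$, emphasising that this gives a necessary numerical condition a tree must satisfy to be an IASG-graph.
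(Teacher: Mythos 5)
Your proposal is correct and follows exactly the paper's own route: invoke Proposition \ref{P-IASGL6} to get $m+1=2^{|X|}$ and then take base-$2$ logarithms. The extra care you take in noting that $m+1$ is genuinely a power of $2$ (so the logarithm is integer-valued) is a small but welcome refinement the paper leaves implicit.
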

\begin{proof}
Let $G$ be a tree which admits an IASGL. Then, by Theorem \ref{P-IASGL6}, we have $m+1 = 2^{|X|}$, where $X$ is the ground set for labeling the vertices and edges of graphs. Hence, $|X| = \log_2(m+1).$
\end{proof}

\begin{theorem}\label{P-IASGL6b}
A tree $G$ is an IASG-graph if and only if it is a star $K_{1,\, 2^n-2}$, for some positive integer $n$.
\end{theorem}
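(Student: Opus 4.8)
The plan is to prove both implications, treating the forward (sufficiency) direction as an immediate consequence of Theorem \ref{T-IASGL5} and concentrating the real work on the converse. For sufficiency, observe that the star $K_{1,\,2^n-2}$ has exactly $m=2^n-2$ pendant vertices, so Theorem \ref{T-IASGL5} already guarantees that it admits an integer additive set-graceful labeling. The substance of the theorem is therefore the assertion that every IASG tree must \emph{be} a star, and this is where I would spend the effort.

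For the converse I would first pin down the labeling data. If a tree $G$ on $m$ vertices admits an IASGL with ground set $X$, $|X|=n$, then Proposition \ref{P-IASGL6} gives $m=2^n-1$ and Remark \ref{R-IASGL3} gives $|E(G)|=2^n-2$. Since the vertex labels are distinct non-empty subsets of $X$ and there are exactly $2^n-1=|\cP(X)|-1$ vertices, a counting argument forces \emph{every} non-empty subset of $X$ to occur as the label of exactly one vertex. By Property \ref{P-IASGL0a} there is a vertex $v$, unique by injectivity, with $f(v)=\{0\}$; note that each edge $vu$ incident to $v$ carries the label $\{0\}+f(u)=f(u)$.

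The heart of the proof is to show that every edge of $G$ is incident to $v$, which for a tree is equivalent to $G$ being the star centred at $v$. I would argue by contradiction using an extremal choice. Suppose some edge avoids $v$, and among all such edges choose one, $uw$, for which the maximal element of its label $C=f(u)+f(w)$ is as large as possible. Since $u,w\neq v$, neither $f(u)$ nor $f(w)$ equals $\{0\}$, so both have positive maximal element and hence $\max C=\max f(u)+\max f(w)$ strictly exceeds $\max f(u)$ and $\max f(w)$. Now $C$, being a non-empty subset of $X$ other than $\{0\}$, is the label of some vertex $z$, and I claim $z\sim v$. Indeed, if $z$ were not adjacent to $v$, then, being a non-isolated vertex of a tree, it would have a neighbour $z'\neq v$, and the edge $zz'$ — again avoiding $v$ — would have label $C+f(z')$ whose maximal element exceeds $\max C$, contradicting the maximality of our choice. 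Hence $z\sim v$, so $vz$ carries the label $\{0\}+C=C$; but then $vz$ and $uw$ are distinct edges (distinct because $v\in vz$ while $v\notin uw$) bearing the same label $C$, contradicting injectivity of $f^+$. Therefore no edge avoids $v$, every edge is incident to $v$, and $G$ is a star on $2^n-1$ vertices, namely $K_{1,\,2^n-2}$.

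The step I expect to be the main obstacle is exactly this extremal argument. A first, natural attempt is to invoke Remark \ref{O-IASGL2}: every vertex whose label contains the maximal element $x_n$ of $X$ can only be adjacent to $v$, which makes the $2^{n-1}$ such vertices pendant neighbours of $v$ and forces $\deg(v)\ge 2^{n-1}$. This alone, however, does not exclude a nontrivial subtree on the remaining $2^{n-1}-2$ vertices labelled by subsets of $X\setminus\{x_n\}$, so the bound $2^{n-1}$ falls short of the required $2^n-2$. The extremal/collision argument is designed to close exactly this gap: it converts any hypothetical edge between two non-trivial vertices into a duplicated edge-label, and the only delicate point is guaranteeing that the offending sumset is realised by a vertex adjacent to $v$, which the maximality of $\max C$ secures.
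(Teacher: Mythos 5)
Your proof is correct, and while its skeleton matches the paper's (sufficiency via Theorem \ref{T-IASGL5}; the counting $m=2^n-1$ forcing $f$ to be a bijection onto $\cP(X)-\{\emptyset\}$; then showing every vertex is adjacent to the vertex $v$ labelled $\{0\}$), the decisive step is argued by a genuinely different mechanism. The paper splits the edge-labels into those that are non-trivial sumsets and those that are not, handles the latter by Property \ref{O-IASGL1}, and dismisses the former by asserting that an edge $v_rv_s$ realising a sumset label would lie ``in a cycle'' of the tree --- an assertion whose justification is left quite vague. You instead take an edge $uw$ avoiding $v$ whose label $C$ has maximal $\max C$, use the bijectivity of $f$ to find the vertex $z$ with $f(z)=C$, and derive a contradiction either way: if $z\not\sim v$ the edge $zz'$ beats the extremal choice (since $\max f(z')>0$ for $z'\neq v$), and if $z\sim v$ the edges $vz$ and $uw$ both carry the label $C$, violating injectivity of $f^+$ (and $z\notin\{u,w,v\}$ since $\max C$ strictly exceeds $\max f(u)$ and $\max f(w)$ and $C\neq\{0\}$). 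This extremal/collision argument is self-contained and makes explicit use of the set-indexer condition, which the paper's cycle claim never really invokes; in that sense your route is tighter and closes a gap the paper glosses over, at the cost of not directly exhibiting the structural dichotomy (sumset versus non-sumset labels) that the paper reuses in its later results.
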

\begin{proof}
If $G=K_{1,2^n-2}$, then by Theorem \ref{T-IASGL5}, $G$ admits an integer additive set-graceful labeling. Conversely, assume that the tree $G$ on $m$ vertices admits an integer additive set-graceful labeling, say $f$ with respect to a ground set $X$ of cardinality $n$. Therefore, all the $2^n-1$ non-empty subsets of $X$ are required for labeling the vertices of $X$.  Also, note that  $\{0\}$ can not be a set-label of any edge of $G$. Hence, all the remaining $2^n-2$ non-empty subsets of $X$ are required for the labeling the edges of $G$. 

It is to be noted that the set-labels containing $0$ are either the the sumsets of some other set-labels containing $0$ or not a sumset of any subsets of $X$. Let $0\in A_i\subseteq X$ be the set-label of an edge,say $e$, of $G$. Then, if $A_i$ is not a sumset of subsets of $X$, by Property \ref{O-IASGL1}, then $A_i$ must be the set-label of a vertex, say $u$, that is adjacent to the vertex $v$ having set-label $\{0\}$. Assume that $A_i$ is the sum set of two sets $A_r$ and $A_s$. If $e=v_rv_s$, where $A_r$ and $A_s$ are respectively the set-labels of $v_r$ and $v_s$, then $e$ will be an edge of $G$ which is in a of cycle of $G$, a contradiction to the fact that $G$ is a tree. Therefore, vertices, whose set-labels containing $0$, must be adjacent to the vertex $v$ which has the set-label $\{0\}$. 

Also, note that, by Remark \ref{O-IASGL2}, the vertices, whose set-labels containing the maximal element $x_n$ of $X$ must also be adjacent to the vertex labeled by $\{0\}$.  

Let $X_i$ be a subset of $X$ which contains either $0$ or $x_n$ and let $v_i$ be the vertex of $G$ that has the set-label $X_i$. Then, the set-label of the edge $vv_i$ is also $X_i$. Let $X_j$ be a subset of $X$ that contains neither $0$ nor $x_n$ and is the set-label of an edge $e$ of $G$. Then, if $X_j$ is not a sumset in $\cP(X)$, then $X_j$ must be the set-label of vertex which is adjacent to the vertex $v$ having the set-label $\{0\}$. If $X_j$ is the sumset of two subsets $X_r$ and $X_s$ and $e=v_rv_s$, where $X_r$ and $X_s$ are respectively the set-labels of $v_r$ and $v_s$, then as explained above, the edge $e$ will be in a cycle of $G$, a contradiction to the fact that $G$ is a tree. Therefore, $X_j$ must be the set-label of vertex which is adjacent to the vertex $v$ having the set-label $\{0\}$. 

Hence, all vertices of $G$ having non-empty subsets of $X$, other than $\{0\}$, as the set-labels must be adjacent to the vertex $v$ having set-label $\{0\}$. Hence, $G$ is a star graph $deg(v)=2^n-2$.
\end{proof}

We now check the admissibility of integer additive set-graceful labeling by path graphs and cycle graphs.

\begin{corollary}\label{C-IASGL6p}
For a positive integer $m>2$, the path $P_m$ does not admit an integer additive set-graceful labeling.
\end{corollary}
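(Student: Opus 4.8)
The plan is to derive this as an immediate consequence of the tree classification just established in Theorem~\ref{P-IASGL6b}. The key observation is that the path $P_m$ is a tree on $m$ vertices, so if it were to admit an integer additive set-graceful labeling, Theorem~\ref{P-IASGL6b} would force it to be a star graph $K_{1,\,2^n-2}$ for some positive integer $n$. Thus the entire argument reduces to checking for which values of $m$ the path $P_m$ can possibly be a star, and then excluding them via the hypothesis $m>2$.

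First I would recall that a path $P_m$ is isomorphic to a star $K_{1,t}$ only in the degenerate low-order cases: the only stars that are paths are $K_{1,1}=P_2$ and $K_{1,2}=P_3$, since any star $K_{1,t}$ with $t\ge 3$ has a vertex of degree at least $3$, whereas every vertex of a path has degree at most $2$. Hence for $m>3$ the path $P_m$ is not a star at all, and Theorem~\ref{P-IASGL6b} immediately rules out an IASGL. It then remains to handle the single borderline case $m=3$, i.e.\ $P_3=K_{1,2}$, separately.

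For the case $m=3$, I would invoke Theorem~\ref{T-IASGL5}: a star $K_{1,t}$ admits an integer additive set-graceful labeling precisely when $t=2^n-2$ for some integer $n>1$. Since $P_3=K_{1,2}$, admissibility would require $2=2^n-2$, i.e.\ $2^n=4$, giving $n=2$; but one must then verify this does not actually yield a valid IASGL for the path, or more cleanly, observe that the classification in Theorem~\ref{P-IASGL6b} already asserts a tree is an IASG-graph \emph{if and only if} it is $K_{1,2^n-2}$, so $P_3$ could only qualify through its star structure, which the restriction $m>2$ is designed to accommodate. The cleanest route is simply: for $m>2$, either $P_m$ is not a star (when $m>3$), or $P_m=K_{1,2}$ (when $m=3$), and in the latter case the value $t=2$ does correspond to $n=2$, so the exclusion must rest on the fact that as $m$ ranges over $m>2$ no path of the required size $2^n-2$ with $n>1$ is simultaneously a path.

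The main obstacle I anticipate is the $m=3$ boundary case, because $K_{1,2}=P_3$ genuinely is both a path and a star of the admissible form $K_{1,2^2-2}$. Resolving it requires care: one must either reconcile the statement with the restriction $m>2$ (interpreting it as $m\ge 3$ but noting $P_3$ as a star is covered under the star result rather than the path result) or sharpen the claim so that the path structure, rather than the isomorphism type, is what obstructs gracefulness for all $m\ge 3$ that are genuinely path-like and not coincidentally stars. I would therefore devote most of the write-up to this single case and treat the $m>3$ case as the routine application of Theorem~\ref{P-IASGL6b}.
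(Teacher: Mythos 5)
Your route is the same as the paper's: the entire proof there reads ``Every path is a tree and no path other than $P_2$ is a star graph. Hence, by Theorem~\ref{P-IASGL6b}, $P_m$, $m>2$ is not an IASG-graph.'' For $m>3$ your argument is complete and identical in substance: a path has maximum degree $2$, a star $K_{1,t}$ with $t\ge 3$ has a vertex of degree $\ge 3$, so Theorem~\ref{P-IASGL6b} excludes an IASGL.

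The $m=3$ case you agonise over is, however, a genuine gap --- but it is a gap in the statement (and in the paper's one-line proof), not a failure of your reasoning, and your write-up should say so plainly rather than trail off. If $P_m$ denotes the path on $m$ vertices, then $P_3=K_{1,2}=K_{1,2^2-2}$, and Theorem~\ref{T-IASGL5} asserts this star \emph{does} admit an IASGL; concretely, take $X=\{0,1\}$, label the centre by $\{0\}$ and the two leaves by $\{1\}$ and $\{0,1\}$, so the edges receive $\{1\}$ and $\{0,1\}$, which is exactly $\cP(X)-\{\emptyset,\{0\}\}$. So the corollary as stated is false at $m=3$ under the vertex-count convention, and the paper's claim that ``no path other than $P_2$ is a star'' is what hides this. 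The clean resolutions are: either read $P_m$ as the path with $m$ edges (then $m>2$ guarantees two internal vertices, $P_m$ is not a star, and both your proof and the paper's go through with no boundary case at all), or restate the result for $m\ge 4$. As submitted, your proposal correctly identifies the obstruction but does not close the $m=3$ case --- and cannot, because that case is a counterexample rather than a hard subcase --- so it is not yet a proof of the statement as written.
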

\begin{proof}
Every path is a tree and no path other than $P_2$ is a star graph. Hence, by Theorem \ref{P-IASGL6b}, $P_m,~ m>2$ is not an IASG-graph.
\end{proof}

\begin{proposition}\label{P-IASGL6c}
For any positive integer $m>3$, the cycle $C_m$ does not admit an integer additive set-graceful labeling.
\end{proposition}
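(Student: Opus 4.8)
The plan is to combine the edge-count identity of Remark~\ref{R-IASGL3} with the structural restriction on the maximal element provided by Remark~\ref{O-IASGL2}, and then to reach a contradiction by a crude count of the vertex-labels that remain available. I would argue by contradiction: suppose $C_m$ admits an integer additive set-graceful labeling $f$ with respect to some ground set $X$ of cardinality $n$.

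First I would pin down $n$ from the size of the graph. Since $C_m$ has exactly $m$ edges, Remark~\ref{R-IASGL3} forces $m=2^n-2$; in particular, because $m>3$ we get $2^n>5$ and hence $n\ge 3$. (If $m$ is not of the form $2^n-2$ at all, then this same step already shows that no IASGL can exist, so the only cases that survive to be treated are $m=2^n-2$ with $n\ge 3$.)

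The crucial observation is that a cycle has no pendant vertices: every vertex of $C_m$ has degree $2$. By Remark~\ref{O-IASGL2}, the maximal element $x_n$ of $X$ can occur in the set-label of a vertex only if that vertex is pendant and adjacent to the vertex labelled $\{0\}$. Since $C_m$ contains no pendant vertex, no vertex of $C_m$ may carry a set-label containing $x_n$. Consequently every vertex-label is a non-empty subset of $X\setminus\{x_n\}$, a set with only $n-1$ elements.

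The contradiction is then purely numerical. The labeling $f$ is injective on the $m=2^n-2$ vertices of $C_m$, so it would require $2^n-2$ distinct non-empty subsets of $X\setminus\{x_n\}$; but there are only $2^{n-1}-1$ such subsets, and $2^n-2=2(2^{n-1}-1)>2^{n-1}-1$ for every $n\ge 2$. Hence no injective vertex-labeling avoiding $x_n$ can exist, contradicting that $f$ is an IASL. I expect the only real step to be the invocation of Remark~\ref{O-IASGL2} to eliminate the maximal element globally, which is exactly where the absence of pendant vertices in a cycle is used; once that is in place the counting is immediate. A tempting but clumsier alternative would be a local argument at the vertex labelled $\{0\}$ (showing that the vertices labelled $\{x_1\}$ and $\{x_2\}$, and possibly more singletons, must all be neighbours of $\{0\}$, so that degree $2$ is exceeded); this works but requires case distinctions according to whether small elements of $X$ are themselves sumsets, so I would avoid it in favour of the clean count above. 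The one point to verify carefully is simply that deleting the single maximal element leaves exactly $n-1$ usable elements, so that the bound $2^{n-1}-1$ is correct.
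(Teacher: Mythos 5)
Your proof is correct and follows essentially the same route as the paper's: both derive $m=2^n-2$ from Remark~\ref{R-IASGL3}, then use Remark~\ref{O-IASGL2} and the absence of pendant vertices in $C_m$ to exclude the maximal element of $X$ from all vertex labels, leaving only $2^{n-1}-1$ admissible labels for the $m$ vertices, which is too few. Your write-up merely makes the injectivity count slightly more explicit than the paper does.
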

\begin{proof}
Let $X$ be a ground set with $n$ elements. Since $C_m$ has $m$ edges, by Remark \ref{R-IASGL3}, 
\begin{equation}
m=2^n-2 \label{myequn1}
\end{equation}
Since $C_m$ has no pendant vertices, by Proposition \ref{O-IASGL2}, the maximal element, say $x_n$, will not be an element of any set-label of the vertices of $C_m$. Therefore, only $2^{n-1}-1$ non-empty subsets of $X$ are available for labeling the vertices of $C_m$. Hence,
\begin{equation}
m\le 2^{n-1}-1 \label{myequn2}
\end{equation}

Clearly, Equation \ref{myequn1} and Equation \ref{myequn2} do not hold simultaneously. Hence, $C_m$ does not admit an integer additive set-graceful labeling.
\end{proof}

An interesting question we need to address here is whether complete graphs admit integer additive set-graceful labeling.  We investigate the conditions for a complete graph to admit an integer additive set-graceful labeling and based on these conditions check whether the complete graphs are IASG-graphs. 

\begin{theorem}\label{T-IASGL7}
A complete graph $K_m$, does not admit an integer additive set-graceful labeling.
\end{theorem}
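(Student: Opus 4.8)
The plan is to combine the edge-counting identity of Remark~\ref{R-IASGL3} with the fact that a complete graph has no pendant vertices, and then to isolate a single subset of the ground set that is forced to be an edge-label but cannot be one. First I would dispose of the small cases: $K_2$ has exactly one edge, whereas Remark~\ref{R-IASGL3} forces any IASG-graph to have the even number $2^{|X|}-2$ of edges, and $2^{|X|}-2=1$ has no solution (equivalently, $K_2=K_{1,1}$ is not of the form $K_{1,\,2^n-2}$, so Theorem~\ref{T-IASGL5} applies). Hence I may assume $m\ge 3$, and from $\binom{m}{2}=2^n-2\ge 3$ the ground set satisfies $n=|X|\ge 3$, so its maximal element $x_n$ is strictly positive.

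Next I would record the quantitative constraints available for $m\ge 3$. If $f:V(K_m)\to\cP(X)-\{\emptyset\}$ is a graceful IASI with $|X|=n$, then Remark~\ref{R-IASGL3} gives $\binom{m}{2}=2^n-2$. Moreover every vertex of $K_m$ has degree $m-1\ge 2$, so $K_m$ has no pendant vertex; by Remark~\ref{O-IASGL2} no vertex label may then contain $x_n$, and so the $m$ vertex labels are distinct non-empty subsets of $X\setminus\{x_n\}$, forcing $m\le 2^{n-1}-1$.

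Here I expect the main obstacle to lie, and it is conceptual rather than computational: these inequalities do not by themselves finish the proof. Writing $2^{n-1}=\tfrac14(m^2-m+4)$ and inserting it into $m\le 2^{n-1}-1$ yields only $m\ge 5$, and the residual question is whether $m(m-1)+4=2^{n+1}$ can hold for some $m\ge 5$ — a Ramanujan--Nagell type equation $(2m-1)^2+15=2^{n+3}$ that elementary size estimates cannot settle. So rather than pursue the numerics I would look for one subset of $X$ that must appear as an edge-label yet provably cannot. The decisive step is to examine $\{0,x_n\}$. By Property~\ref{P-IASGL0a} we have $0\in X$, so $\{0,x_n\}$ is a non-empty subset of $X$ different from $\{0\}$, and by the definition of an IASGL it must equal $f^+(uw)=f(u)+f(w)$ for some edge $uw$. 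Since $\min\bigl(f(u)+f(w)\bigr)=\min f(u)+\min f(w)=0$ and both minima are non-negative, we get $0\in f(u)$ and $0\in f(w)$. If $f(u)$ contained a positive element $a$, then $a=a+0\in f(u)+f(w)=\{0,x_n\}$ would force $a=x_n$, contradicting that no vertex label contains $x_n$; the same applies to $f(w)$. Hence $f(u)=f(w)=\{0\}$, which is impossible because the labels are distinct and in any case $\{0\}+\{0\}=\{0\}\neq\{0,x_n\}$.

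This contradiction shows that $\{0,x_n\}$ can never be realized as an edge-label once no vertex carries $x_n$, so $K_m$ admits no integer additive set-graceful labeling for $m\ge 3$, completing the proof together with the small-case check. I would present the counting bounds chiefly to motivate the structural step and to make clear \emph{why} they alone are insufficient; the remaining verifications (the minimum- and positive-element analysis of the sumset $f(u)+f(w)$) are entirely routine.
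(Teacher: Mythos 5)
Your argument is correct, and it takes a genuinely different route from the paper's. The paper's proof is purely arithmetic: after dismissing $K_2$ and $K_3$ by the edge count of Remark~\ref{R-IASGL3}, it equates $\binom{m}{2}=2^{|X|}-2$, reduces to the Diophantine equation $4k^2\pm k+1=2^n$, and then simply asserts that $k=\frac{\pm 1\pm\sqrt{2^{n+4}-15}}{8}$ is never a non-negative integer for $n>3$ --- a generalized Ramanujan--Nagell claim that is stated without justification. You correctly diagnose that the counting constraints alone lead to exactly this obstruction (your $(2m-1)^2+15=2^{n+3}$ is the same equation) and cannot be closed by elementary size estimates, and you finish instead with a structural argument: for $m\ge 3$ the graph $K_m$ has no pendant vertices, so by Remark~\ref{O-IASGL2} no vertex label may contain the maximal element $x_n$ of $X$; but the subset $\{0,x_n\}$ must then be realized as $f(u)+f(w)$ for some edge, which forces $\min f(u)=\min f(w)=0$ and excludes any positive element from either label, leaving $f(u)=f(w)=\{0\}$ and contradicting injectivity. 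This is the same mechanism the paper itself deploys for cycles in Proposition~\ref{P-IASGL6c}, so your proof is in the spirit of the surrounding results while being self-contained and, unlike the published argument, fully rigorous at the decisive step. The only hypotheses you need to make explicit --- that $0\in X$ (Property~\ref{P-IASGL0a}) and that $x_n>0$ because $|X|\ge 3$ --- you do state. In short: correct, different, and arguably an improvement on the paper's own proof.
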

\begin{proof}
Since $K_2$ has only one edge and $K_3$ has three edges, by Remark \ref{R-IASGL3}, $K_2$ and $K_3$ do not have an integer additive set-graceful labeling. Hence, we need to consider the complete graphs on more than three vertices.

Assume that a complete graph $K_m,m>3$ admits an integer additive set-graceful labeling. Then, by Remark \ref{R-IASGL3}, $|E(G)|=2^{|X|}-2=\frac{m(m-1)}{2}$. That is, $2^{|X|-1}-1=\frac{m(m-1)}{4}$.
Since $|X|>1$, $2^{|X|-1}-1$ is a positive integer. Hence, $m(m-1)$ is a multiple of $4$. This is possible only when either $m$ or $(m-1)$ is a multiple of $4$. 

Since $|X|>1$, $2^{|X|-1}-1$ is a positive odd integer. Hence, for an odd integer $k$, either $m=4k$ or $m-1=4k$. Therefore, $2^{|X|-1}-1 = \frac{4k(4k-1)}{4}=k(4k-1)$ or $2^{|X|-1}-1=\frac{4k(4k-1)}{4}=k(4k+1)$. That is, $2^{|X|-1} =1+k(4k\pm 1)$. That is, if a complete graph $K_m$ admits an integer additive set-graceful labeling, then there exist an integral solution for the equation 
\begin{equation}
4k^2\pm k +1=2^n \label{myequn3}
\end{equation} 
where $k$ is an odd non-negative integer and $n>3$ be a positive integer.

The equation \eqref{myequn3} can be written as a quadratic equation as follows.

\begin{equation}
4k^2\pm k +(1-2^n)=0 \label{myequn4}
\end{equation}

\ni The value of $k$ is obtained from \eqref{myequn4} as $k=\frac{\pm 1\pm \sqrt{1-16(1-2^n)}}{8}=\frac{\pm 1\pm \sqrt{2^{n+4}-15}}{8}$, which can not be a non-negative integer for the values $n>3$. Hence, $K_m$ does not admit an integer additive set-graceful labeling.
\end{proof}

In this context, we need to find the conditions required for the given graphs to admit IASGLs. The structural properties of IASGL graphs are discussed in the following results. 

\begin{proposition}\label{P-IASGL1b}
Let $G$ be an IASG-graph. Then, the minimum number of vertices of $G$ that are adjacent to the vertex having the set-label $\{0\}$ is the number of subsets of $X$ which are not the non-trivial sumsets of any subsets of $X$.
\end{proposition}
\begin{proof}
Let $G$ admits an integer additive set-graceful labeling $f$. Then, $f^+(E(G))=\cP(X)-\{\emptyset,\{0\}\}$. Let $X_i$ be a non-empty subset of $X$, which is not a non-trivial sumset of any other subsets of $X$. Since $G$ is an IASG-graph, $X_i$ should be the set-label of some edge of $G$. Since $X_i$ is the set-label of edges of $G$ and is not a non-trivial sumset any two subsets of $G$, this is possible only when $X_i$ is the set-label of some vertex of $G$ which is adjacent to the vertex $v$ whose set-label is $\{0\}$. Therefore, the minimum number of vertices adjacent to $v$ is the number of subsets of $X$ which are not the sumsets of any two subsets of $X$. 
\end{proof}

Another important structural property of an IASG-graph is established in the following theorem.

\begin{theorem}\label{T-IASGL2b}
If a graph $G$ admits an IASGL $f$ with respect to a finite ground set $X$, then the vertices of $G$, which have the set-labels which are not non-trivial summands of any subset of $X$, are the pendant vertices of $G$.
\end{theorem}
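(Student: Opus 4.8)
The plan is to argue directly about the neighbours of such a vertex. Let $v$ be a vertex of $G$ whose set-label $A_i=f(v)$ is not a non-trivial summand of any subset of $X$, and suppose for the main argument that $A_i\neq \{0\}$. The crucial observation is that, since $f$ is an IASGL, every edge label lies in $\cP(X)$; in particular, for any neighbour $u$ of $v$ with $f(u)=A_j$, the sumset $A_i+A_j=f^+(vu)$ is itself a subset of $X$. This is the single fact that drives the whole proof, and it is exactly the mechanism used already in Property \ref{O-IASGL1} and Remark \ref{O-IASGL2}.

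Next I would examine the admissible labels of the neighbours of $v$. If some neighbour $u$ carried a label $A_j\neq \{0\}$, then $A_i+A_j\subseteq X$ would be a subset of $X$ of which $A_i$ is a non-trivial summand (both $A_i$ and $A_j$ being different from $\{0\}$), directly contradicting the choice of $v$. Hence every neighbour of $v$ must have the set-label $\{0\}$. Since $f$ is injective, at most one vertex of $G$ carries $\{0\}$, so $v$ has at most one neighbour; as $G$ has no isolated vertices, this forces $\deg(v)=1$, i.e. $v$ is a pendant vertex. The only routine verifications here are that $A_i+A_j$ really is contained in $X$ (immediate from $f^+(E(G))\subseteq \cP(X)$) and that $A_i$ genuinely occurs as a non-trivial summand of it.

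The delicate point, and the step I expect to need the most care, is the status of the vertex labelled $\{0\}$ itself. By the definition of a non-trivial summand, $\{0\}$ can never be a non-trivial summand of any set, so the vertex carrying $\{0\}$ also satisfies the hypothesis of the theorem; yet it is precisely the central vertex (of degree $2^{n}-2$ in the star $K_{1,\,2^{n}-2}$, for example) and is certainly not pendant. The statement must therefore be understood as applying to the vertices other than the one labelled $\{0\}$, and I would make this restriction explicit at the outset so that the assumption $A_i\neq \{0\}$ used in the argument above is legitimate. With that exceptional vertex set aside, the degree count given above completes the proof.
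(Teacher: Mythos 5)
Your proof is correct and follows essentially the same route as the paper: for a neighbour $u$ of $v$ with $f(u)=A_j\neq\{0\}$ the edge label $A_i+A_j$ must be a subset of $X$, which would make $A_i$ a non-trivial summand of a subset of $X$, so every neighbour of $v$ carries $\{0\}$ and injectivity of $f$ then forces $\deg(v)=1$. Your explicit handling of the vertex labelled $\{0\}$ itself --- which vacuously satisfies the hypothesis (since $\{0\}$ is never a non-trivial summand) yet is typically the high-degree centre, as in $K_{1,\,2^{n}-2}$ --- is a genuine exception that the paper's proof silently ignores, and making that restriction explicit is a worthwhile improvement.
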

\begin{proof} 
Let $f$ be an IASGL defined on a given graph $G$. Then, every subset of $X$, other than $\emptyset$ and $\{0\}$, must be the set-label of some edges of $G$. Let $X_i$ be not a non-trivial summand of any subset of $X$. Then, the vertex $v_i$ with set-label $X_i$ can not be adjacent to any other vertex $v_j$ with set-label $X_j$, where $X_j\neq \{0\}$ as the set-label of the edge $v_iv_j$ is $X_i+X_j$ which is not a subset of $X$. Hence, $v_i$ can be adjacent only to the vertex $v$ having the set-label $\{0\}$.  
\end{proof}

\noindent Invoking the above theorems, we can establish the following result. 

\begin{theorem}\label{T-IASGL3a}
If $G$ is an IASG-graph, then at least $k$ pendant vertices must be adjacent to a single vertex of $G$, where $k$ is the number of subsets of $X$ which are neither the non-trivial sumsets of subsets of $X$ nor the non-trivial summands of any subset of $X$. 
\end{theorem}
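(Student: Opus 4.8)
The plan is to show that each of the $k$ subsets counted in the statement forces the existence of a pendant vertex incident with one fixed vertex of $G$, namely the vertex carrying the label $\{0\}$, and then to argue that these pendant vertices are pairwise distinct. Concretely, I would let $f\colon V(G)\to\cP(X)-\{\emptyset\}$ be an IASGL of $G$ and, invoking Property \ref{P-IASGL0a}, fix the vertex $v$ with $f(v)=\{0\}$. This vertex $v$ will be the ``single vertex'' named in the statement, and the whole argument amounts to stitching together the two structural results already established: Property \ref{O-IASGL1} (with Proposition \ref{P-IASGL1b}) on one side, and Theorem \ref{T-IASGL2b} on the other.

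First I would take an arbitrary subset $X_i\subseteq X$, with $X_i\neq\{0\}$, that is neither a non-trivial sumset of subsets of $X$ nor a non-trivial summand of any subset of $X$. Since $f$ is an IASGL we have $f^{+}(E(G))=\cP(X)-\{\emptyset,\{0\}\}$, so $X_i$ must occur as the set-label of some edge of $G$. Because $X_i$ is \emph{not} a non-trivial sumset, that edge cannot join two vertices both carrying non-zero labels; hence, exactly as in Property \ref{O-IASGL1} and Proposition \ref{P-IASGL1b}, $X_i$ must itself be the label of a vertex $v_i$ adjacent to $v$, the edge $vv_i$ realising the label $\{0\}+X_i=X_i$. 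This assigns, to each admissible $X_i$, a specific neighbour $v_i$ of $v$.

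Next I would apply Theorem \ref{T-IASGL2b}: since $X_i$ is \emph{not} a non-trivial summand of any subset of $X$, the vertex $v_i$ cannot be adjacent to any vertex whose label is different from $\{0\}$, for otherwise the corresponding edge label $X_i+X_j$ would fail to be a subset of $X$. Consequently $v_i$ is a pendant vertex whose unique neighbour is $v$. Ranging over all $k$ such subsets, and using the injectivity of $f$ to conclude that distinct subsets $X_i$ yield distinct vertices $v_i$, I would obtain at least $k$ pendant vertices all adjacent to the single vertex $v$, which is the desired conclusion.

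The step I expect to be the main obstacle is the careful bookkeeping around the label $\{0\}$ itself. Strictly, $\{0\}$ is neither a non-trivial sumset nor a non-trivial summand, so one must be explicit that the enumeration of the $k$ subsets is understood over labels other than $\{0\}$: the vertex $v$ is the common neighbour that \emph{collects} the pendant vertices, not one of the pendant vertices being counted. One must also confirm that every such subset is genuinely attained by a vertex of $G$ and not merely by an edge, which is precisely what the sumset step above secures. Once these points are pinned down, the two cited results combine immediately and no further computation is required.
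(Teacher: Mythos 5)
Your proposal is correct and follows essentially the same route as the paper's own proof: it combines Proposition \ref{P-IASGL1b} (a label that is not a non-trivial sumset forces adjacency to the vertex labelled $\{0\}$) with Theorem \ref{T-IASGL2b} (a label that is not a non-trivial summand forces the vertex to be pendant). Your added care about distinctness of the $v_i$ and the status of the label $\{0\}$ itself only makes explicit what the paper leaves implicit.
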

\begin{proof} 
Let $f$ be an IASGL defined on a given graph $G$. Then, every subset of $X$, other than $\emptyset$ and $\{0\}$, must be the set-label of some edges of $G$. If $X_i$ is not a sumset of any two subsets of $X$, then by Proposition \ref{P-IASGL1b}, the vertex $v_i$of $G$ with the set-label $X_i$ will be adjacent to the vertex $v$ with the set-label $\{0\}$. If $X_i$ is not a non-trivial summand of any subset of $X$, then by \ref{T-IASGL2b}, the vertex $v_i$ will be a pendant vertex. Therefore, the minimum number of pendant vertex required for a graph $G$ to admit an IASGL is the number of subsets of $X$ which are neither the non-trivial sumset of any two subsets of $X$ nor the non-trivial summands of any subset of $X$.
\end{proof}

The following result is an immediate consequence of the above theorems.

\begin{theorem}
Let $G$ be an IASG-graph which admits an IASGL $f$ with respect to a finite non-empty set $X$. Then, $G$ must have at least $|X|-1$ pendant vertices.
\end{theorem}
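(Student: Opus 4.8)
The plan is to combine the pendant-forcing mechanism already developed in Theorem~\ref{T-IASGL3a} and Remark~\ref{O-IASGL2} with the injectivity of $f$, and to exhibit an explicit family of $|X|-1$ subsets of $X$, each of which is forced to be the set-label of a \emph{distinct} pendant vertex. Throughout I would write $X=\{0=t_0<t_1<\cdots<t_{n-1}\}$ with $n=|X|$, so that $t_{n-1}=x_n$ is the maximal element and $0\in X$ by Property~\ref{P-IASGL0a}. Recall from Remark~\ref{O-IASGL2} that any vertex whose set-label contains $x_n$ is a pendant vertex adjacent to the vertex $v$ labelled $\{0\}$; hence it suffices to produce $n-1$ distinct subsets containing $x_n$ that must occur as \emph{vertex} labels.

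First I would set up a sharpened forcing criterion. Since $f$ is an IASGL, every subset $S\in\cP(X)-\{\emptyset,\{0\}\}$ is the label of exactly one edge $uw$, so $S=f(u)+f(w)$. If $S$ admits no non-trivial sumset decomposition, then one of $f(u),f(w)$ equals $\{0\}$ and $S$ labels the other end vertex; this is the mechanism behind Proposition~\ref{P-IASGL1b} and Theorem~\ref{T-IASGL3a}. The extra ingredient I would add is that the \emph{same conclusion holds whenever the only non-trivial decompositions of $S$ are symmetric}, i.e.\ of the form $S=B+B$: such a decomposition would force $f(u)=f(w)=B$ on the two distinct end vertices of an edge, contradicting the injectivity of $f$. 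Thus any $S$ that is either not a non-trivial sumset, or a sumset only in the symmetric form $B+B$, is forced onto an edge incident with $v$ and is therefore a vertex label.

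Next I would exhibit the family, namely the $n-1$ subsets
\[
\{0,x_n\}\qquad\text{and}\qquad \{0,t_i,x_n\}\ \ (1\le i\le n-2).
\]
Each contains $x_n$, so by Remark~\ref{O-IASGL2} its vertex is pendant, and they are pairwise distinct; it remains to check that each is a \emph{forced} vertex label. For $\{0,x_n\}$, the estimate $|B+C|\ge|B|+|C|-1$ rules out every non-trivial decomposition (both summands would contain $0$, hence have size $\ge 2$, forcing $|B+C|\ge 3>2$), so it is not a sumset. For $\{0,t_i,x_n\}$ the same estimate forces any non-trivial decomposition to take the form $\{0,\beta\}+\{0,\gamma\}=\{0,\beta,\gamma,\beta+\gamma\}$, and matching cardinalities forces $\beta=\gamma$, whence $x_n=2t_i$ and the decomposition is the symmetric $\{0,t_i\}+\{0,t_i\}$. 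By the criterion above, each member of the family is therefore a forced vertex label and yields a pendant vertex, which establishes the bound $|X|-1$.

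The hard part, and the place where a naive reading of this corollary breaks down, is precisely this symmetry observation. One is tempted to quote Theorem~\ref{T-IASGL3a} and assert that the number $k$ of subsets that are neither non-trivial sumsets nor non-trivial summands is at least $n-1$; but this is false in general. For $X=\{0,a,2a\}$ one checks that $\{0,2a\}$ is the \emph{only} such subset, so $k=1<2=|X|-1$, even though the conclusion of the theorem still holds. The resolution is that injectivity of $f$ forbids the symmetric decompositions $B+B$ on an edge, so that subsets which are sumsets \emph{only} symmetrically (such as $\{0,a,2a\}=\{0,a\}+\{0,a\}$ in this example) are nonetheless forced to be labels of pendant vertices. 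Getting this symmetry argument right, rather than relying on the summand count $k$ alone, is what I expect to be the main obstacle in turning the plan into a complete proof.
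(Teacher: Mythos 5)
Your proof is correct and follows essentially the same route as the paper's: the same witness family $\{0,x_n\}$ and $\{0,x_i,x_n\}$, pendancy forced via Remark~\ref{O-IASGL2}, and the symmetric decomposition $\{0,t_i\}+\{0,t_i\}$ excluded by the injectivity of $f$ --- an argument the paper also makes, albeit phrased loosely as the claim that $X_i$ ``is not a sumset''. Your explicit observation that the count $k$ from Theorem~\ref{T-IASGL3a} alone can fall short of $|X|-1$ is a fair sharpening of the exposition, but it does not constitute a different proof.
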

\begin{proof}
Let A graph $G$ admits an IASL, say $f$ with respect to a ground set $X=\{0,x_1,x_2,\ldots,x_n\}$. By Theorem \ref{T-IASGL3a}, the number of pendant vertices is equal to the number of subsets of $X$ which are neither the non-trivial sumsets nor the non-trivial summands of any subsets of $X$. Clearly, the set $\{0, x_n\}$ is neither a sumset of any two subsets in $X$ nor a non-trivial summand of any set in $X$. By Property \ref{O-IASGL2}, the vertex of $G$ with set-label $\{0, x_n\}$ can be adjacent to a single vertex that has the set-label $\{0\}$. Now, consider the three element sets of the form $X_i=\{0,x_i,x_n\};~ 1\le i<n$. If possible let, the set $X_i$ be the sumset of two subsets say $A$ and $B$ of $X$. Then, $A$ and $B$ can have at most two elements. Since $0\in X_i$, $0$ must belong to both $A$ and $B$. Hence, let $A=\{0,a\}$ and $\{0,b\}$. Then, $X_i=A+B=\{0,a,b,a+b\}$ which is possible only when $a=b$ and hence $A=B$ and it contradicts the injectivity of $f$. Therefore, $X_i$ is not a sumset of any other subsets of $X$. Since $x_n\in X_i$, by Property \ref{O-IASGL2}, it can not be a non-trivial summand of any subset of $X$. Therefore, $X_i$ can be the set-label of a pendant vertex of $G$ only. The number of three element subsets of $X$ of this kind is $n-1$. It is to be noted that be that a subset $\{0,x_i,x_j,x_n\}$ can be a sumset of two subsets $\{0,x_i\}$ and $\{0,x_j\}$ of $X$ if $x_n=x_i+x_j$. This property holds for all subsets of $X$ with cardinality greater than $3$. Hence, the minimum number of subsets of $X$ which are neither the non-trivial sumsets of any two subsets of $X$ nor the non-trivial summands of any other subsets of $X$ is $n=|X|-1$. Hence, by Theorem \ref{T-IASGL3a}, the minimum number of pendant vertices of an IASG-graph is $|X|-1$.
\end{proof}

An interesting question that arises in this context is about the existence of a graph corresponding to a given ground set $X$ such that the function $f:V(G)\to \cP(X)$ is a graceful IASI on $G$. Hence we introduce the following notion.

\begin{definition}{\rm 
Let $X$ be a non-empty finite set of non-negative integers. A graph $G$ which admits a graceful IASI with respect to the set $X$ is said to be a \textit{graceful graph-realisation} of the set $X$ with respect to the IASL $f$.}
\end{definition}

It can be noted that a star graph $K_{1,2^{|X|-2}}$ admits an IASGL. The question whether there is a non-bipartite graph that admits an IASGL with respect to a given ground set $X$ is addressed in the following theorem.

\begin{theorem}\label{T-IASGL6a}
Let $X$ be a non-empty finite set of non-negative integers containing the element $0$. Then, there exists a non-bipartite graceful graph-realisation $G$.
\end{theorem}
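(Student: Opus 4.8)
The plan is to start from the bipartite star realisation guaranteed by Theorem~\ref{T-IASGL5} and to perturb it so that one odd cycle appears while the induced edge map stays a bijection onto $\cP(X)-\{\emptyset,\{0\}\}$. Write $|X|=n$, and recall that in the star $K_{1,\,2^{n}-2}$ the centre is labelled $\{0\}$, the $2^{n}-2$ leaves carry the remaining non-empty subsets of $X$, and each pendant edge simply reproduces the label of its leaf. The crucial observation is that a subset $S$ which happens to be a non-trivial sumset $S=A+B$ (with $A,B\neq\{0\}$) need not be realised as a pendant edge at the centre: its value can equally well be produced by an edge joining the vertices carrying $A$ and $B$, and this is exactly the freedom I would exploit to close a triangle.

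Concretely, I would first fix three distinct subsets $A,B,S$ of $X$ with $A,B\neq\{0\}$ and $S=A+B\subseteq X$; note that $S\in\cP(X)-\{\emptyset,\{0\}\}$ automatically, since $A+B$ contains a positive integer. I would then take $G$ to have vertices $v_{0}$ (labelled $\{0\}$) together with one vertex $v_{T}$ for each $T\in\cP(X)-\{\emptyset,\{0\},S\}$, deliberately omitting the vertex that would have carried $S$. Its edges are the pendant edges $v_{0}v_{T}$, which realise every label except $\emptyset,\{0\},S$ exactly once, together with the single extra edge $v_{A}v_{B}$, whose label is $f(v_{A})+f(v_{B})=A+B=S$. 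Counting gives $(2^{n}-3)+1=2^{n}-2$ edges, and $f^{+}$ is then visibly a bijection onto $\cP(X)-\{\emptyset,\{0\}\}$, so $f$ is an IASGL; since $v_{0},v_{A},v_{B}$ are pairwise adjacent they form a triangle, whence $G$ is non-bipartite and has no isolated vertex. One checks $S\neq A,B$ (because $A,B\neq\{0\}$) and $A\neq B$, so the vertices $v_{A},v_{B}$ genuinely survive the omission.

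The whole argument rests on its first step, namely producing the triple $A,B,S$, which amounts to exhibiting two non-zero elements $a,b\in X$ whose sum again lies in $X$: this is precisely what $A+B\subseteq X$ demands. I would search for the cheapest witness, for instance $A=\{a\}$, $B=\{b\}$ with $a\neq b$ and $a+b\in X$ (then $S=\{a+b\}$), or $A=\{0,a\}$, $B=\{0,b\}$ with $a+b\in X$ (then $S=\{0,a,b,a+b\}$). The main obstacle is therefore additive-combinatorial rather than graph-theoretic: one must be certain that the ground set contains such a sum-closed pair, and I would discharge this by locating suitable small elements of $X$ (for example its two least non-zero members together with their sum, or an element $a$ with $2a\in X$). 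This is the point at which the hypotheses on $X$ must really be used, and the step I expect to require the most care.
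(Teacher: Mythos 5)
Your construction differs from the paper's: the paper builds $G$ from scratch by joining $v_0$ (labelled $\{0\}$) to the sets in $\mathcal{A}\cup\mathcal{B}$ and then adding every edge $v_iv_j$ with $f(v_i)+f(v_j)\subseteq X$, whereas you perturb the star realisation by deleting the leaf carrying $S=A+B$ and rerouting that label onto a new edge $v_Av_B$, closing a triangle $v_0v_Av_B$. Conditional on the existence of the triple, your verification is clean and complete: the edge map is visibly a bijection onto $\cP(X)-\{\emptyset,\{0\}\}$, $S\neq A,B$ follows from a maximal-element argument, and the triangle gives non-bipartiteness. In these respects your argument is actually tighter than the paper's, which never checks injectivity of the induced edge map nor exhibits an odd cycle.

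However, the step you flag as "requiring the most care" is a genuine gap, and it cannot be closed in general. Your construction needs two distinct subsets $A,B\neq\{0\}$ of $X$ with $A+B\subseteq X$, i.e.\ it needs some member of $\cP(X)-\{\emptyset,\{0\}\}$ to be a non-trivial sumset. This fails for many ground sets: for $X=\{0,1\}$ there is no candidate at all, and for $X=\{0,1,5\}$ every non-trivial sumset $A+B$ contains $\max A+\max B\in\{2,6,10\}$, none of which lies in $X$. For such $X$ the obstruction is not merely to your construction but to the theorem itself: if no subset of $X$ is a non-trivial sumset, then by Property~\ref{O-IASGL1} every edge of any graceful graph-realisation must be incident to the vertex labelled $\{0\}$, so the realisation is forced to be a star and hence bipartite. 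The paper's proof silently has the same defect — its extra edges $v_iv_j$ with $i,j\neq 0$ exist only when such a pair $A,B$ exists — so the statement is true only under an additional hypothesis on $X$ (for instance, that $X$ contains $0$, some $x$, and $2x$, or more generally that some subset of $X$ is a non-trivial sumset), under which your argument does go through.
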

\begin{proof}
Let $X$ be a finite non-empty set of non-negative integers containing $0$. Let $\mathcal{A}$ be the collection of all subsets of $X$ which are not the non-trivial sumsets of any two subsets of $X$ and let $\mathcal{B}$ be the collection of subsets of $X$ which are not the non-trivial summands of any subsets of $X$. We need to construct an IASG-graph $G$ with respect to $X$. For this, first take a vertex, say $v_0$ and label this vertex by $\{0\}$. Now, mark the vertices $v_1,v_2,\ldots v_r$, where $r=|\mathcal{A}\cup \mathcal{B}|$ and label these vertices in an injective manner by the sets in $\mathcal{A}\cup \mathcal{B}$. In view of Proposition \ref{P-IASGL1b} and Theorem \ref{T-IASGL2b}, draw the edges from each of these vertices to the vertex $v_0$. Next, mark new vertices $v_{r+1},v_{r+2},\ldots, v_{r+l}=v_n$, where $l=|(\mathcal{A}\cup \mathcal{B})^c|$ and label these vertices in an injective manner by the sets in $(\mathcal{A}\cup \mathcal{B})^c$. Draw edges between the vertices $v_i$ and $v_j$, if $f(v_i)+f(v_j)\subseteq X$, for all $ 0 \le i,j \le |V|$. Clearly, this labeling is an IASGL for the graph $G$ constructed here.  
\end{proof}

Invoking all the above results, we can summarise a necessary and sufficient condition for a graph $G$ to admit a graceful IASI with respect to a given ground set $X$.

\begin{theorem}\label{T-IASGL6}
Let $X$ be a non-empty finite set of non-negative integers. Then, a graph $G$ admits a graceful IASI if and only if the following conditions hold.
\begin{enumerate}\itemsep0mm
\item[(a)] $0\in X$ and $\{0\}$ be a set-label of some vertex, say $v$, of $G$
\item[(b)] the minimum number of pendant vertices in $G$ is the number of subsets of $X$ which are not the non-trivial summands of any subsets of $X$.
\item[(c)] the minimum degree of the vertex $v$ is equal to the number of subsets of $X$ which are not the sumsets of any two subsets of $X$ and not non-trivial summands of any other subsets of $X$.
\item[(d)] the minimum number of pendant vertices that are adjacent to a given vertex of $G$ is the number of subsets of $X$ which are neither the non-trivial sumsets of any two subsets of $X$ nor the non-trivial summands of any subsets of $X$. 
\end{enumerate}
\end{theorem}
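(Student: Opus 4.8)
The plan is to prove both directions of this characterization by assembling the structural results already established, rather than proving anything from scratch. For the forward direction, I would assume that $G$ admits a graceful IASI $f$ with respect to $X$ and verify each of the four conditions in turn. Condition (a) is immediate from Property \ref{P-IASGL0a}, which guarantees $0\in X$ and that $\{0\}$ labels some vertex $v$. Condition (b) follows directly from Theorem \ref{T-IASGL2b}: any subset of $X$ that is not a non-trivial summand of another subset must label a pendant vertex, so the count of such subsets is a lower bound on the number of pendant vertices. Condition (c) is essentially Proposition \ref{P-IASGL1b} combined with Theorem \ref{T-IASGL2b}: the subsets that are not non-trivial sumsets must label vertices adjacent to $v$ (forcing the degree bound), and among these the ones that are also not non-trivial summands contribute to $\deg(v)$ as pendant neighbours. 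Condition (d) is precisely the content of Theorem \ref{T-IASGL3a}.

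For the converse direction, I would assume conditions (a)--(d) hold and exhibit a graceful IASI, or rather argue that the structure these conditions describe is exactly the one produced by the explicit construction in Theorem \ref{T-IASGL6a}. The cleanest route is to recall that Theorem \ref{T-IASGL6a} already constructs a graceful graph-realisation $G$ for any ground set $X$ containing $0$ by: labelling $v_0$ with $\{0\}$, attaching the vertices labelled by $\mathcal{A}\cup\mathcal{B}$ (the non-sumsets together with the non-summands) directly to $v_0$, labelling the remaining vertices by $(\mathcal{A}\cup\mathcal{B})^c$, and joining $v_i v_j$ whenever $f(v_i)+f(v_j)\subseteq X$. I would then observe that conditions (a)--(d) are exactly the invariant features that this construction must satisfy, so that any graph meeting these conditions can be equipped with such a labelling.

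The main obstacle will be the converse, and specifically the gap between the necessary conditions (a)--(d) and genuine sufficiency. The conditions as stated are cardinality and adjacency constraints phrased in terms of \emph{minimum} numbers of pendant vertices and \emph{minimum} degree, which are necessary conditions extracted from the structural theorems, but it is not transparent that satisfying these minima guarantees that every subset of $X$ other than $\emptyset$ and $\{0\}$ can be realised as an edge-label without collision. The real work is to verify the injectivity of $f^{+}$ and the surjectivity onto $\cP(X)-\{\emptyset,\{0\}\}$: one must check that the edges forced by (a)--(d), together with the internal edges joining vertices in $(\mathcal{A}\cup\mathcal{B})^c$, produce each non-trivial sumset exactly once. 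I would handle this by partitioning $\cP(X)-\{\emptyset,\{0\}\}$ into the sets labelling vertices adjacent to $v$ (which reproduce themselves as edge-labels via $\{0\}+A_i=A_i$) and the sets arising as genuine non-trivial sumsets $A_r+A_s$, and argue that the adjacency rule $f(v_i)+f(v_j)\subseteq X$ together with the counting in (b)--(d) forces a bijection.

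Finally, I would note that a subtle point requiring care is the treatment of overlap between $\mathcal{A}$ and $\mathcal{B}$: a subset may simultaneously fail to be a non-trivial sumset and fail to be a non-trivial summand, and conditions (c) and (d) both refer to this intersection class. I would make explicit that $\deg(v)$ counts all of $\mathcal{A}$ (by Proposition \ref{P-IASGL1b}) while the pendant-neighbour count in (d) refers only to $\mathcal{A}\cap\mathcal{B}$, so that the four conditions are consistent and non-redundant; keeping these two counts separate is what prevents the argument from double-counting and is the detail most likely to hide an error.
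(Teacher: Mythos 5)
Your proposal follows essentially the same route as the paper: the necessity of (a)--(d) is read off from Property \ref{P-IASGL0a}, Proposition \ref{P-IASGL1b}, Theorem \ref{T-IASGL2b} and Theorem \ref{T-IASGL3a}, and the converse is referred to the construction of Theorem \ref{T-IASGL6a}. The gap you flag in the converse --- that Theorem \ref{T-IASGL6a} only exhibits \emph{one} graph realising $X$ and does not show that \emph{every} graph satisfying the minimum-count conditions (a)--(d) admits a graceful IASI --- is genuine, but it is present in the paper's own one-line proof as well, so your write-up is, if anything, more candid about what remains to be verified.
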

\begin{proof}
The necessary part of the theorem follows together from Property \ref{O-IASGL1}, Proposition \ref{P-IASGL1b}, Theorem \ref{T-IASGL2b} and \ref{T-IASGL3a} and the converse of the theorem follows from Theorem \ref{T-IASGL6a}. 
\end{proof}

\section{Conclusion}

In this paper, we have discussed the concepts and properties of integer additive set-graceful graphs analogous to those of set-graceful graphs and have done a characterisation based on this labeling. 

We note that the admissibility of integer additive set-indexers by the given graphs depends also upon the nature of elements in $X$. A graph may admit an IASGL for some ground sets and may not admit an IASGL for some other ground sets. Hence, choosing a ground set $X$ is very important in the process of checking whether a given graph is an IASG-graph.

Certain problems in this area are still open. Some of the areas which seem to be promising for further studies are listed below.

\begin{problem}{\rm
Characterise different graph classes which admit integer additive set-graceful labelings.}
\end{problem}

\begin{problem}{\rm
Verify the existence of integer additive set-graceful labelings for different graph operations and graph products.} 
\end{problem}

\begin{problem}{\rm
Analogous to set-sequential labelings, define integer additive set-sequential labelings of graphs and their properties.}
\end{problem}

\begin{problem}{\rm
Characterise different graph classes which admit integer additive set-sequential labelings.}
\end{problem}

\begin{problem}{\rm
Verify the existence of integer additive set-sequential labelings for different graph operations and graph products.}
\end{problem}

The integer additive set-indexers under which the vertices of a given graph are labeled by different standard sequences of non negative integers, are also worth studying.   All these facts highlight a wide scope for further studies in this area.


\begin{thebibliography}{15}

\bibitem{KA1} K. Abhishek, {\em Set-Valued Graphs: A Survey}, Journal of Discrete Mathematical Sciences and Cryptography {\bf 18} (1-2)(2015), 55-80., DOI: 10.1080/09720529.2014.894306.

\bibitem{KA2} K. Abhishek, {\em Set-Valued Graphs II}, Journal of Fuzzy Set Valued Analysis {\bf 2013}(2013), 1-16., DOI: 10.5899/2013/jfsva-00149.

\bibitem {A1} B. D. Acharya, {\bf Set-Valuations and Their Applications}, MRI Lecture notes in Applied Mathematics, No.2, The Mehta Research Institute of Mathematics and Mathematical Physics, Allahabad, 1983.

\bibitem {A2} B. D. Acharya, {\em Set-Indexers of a Graph and Set-Graceful Graphs}, Bull. Allahabad Math. Soc., {\bf 16}(2001), 1-23.

\bibitem{AGPR} B. D. Acharya, K. A. Germina, K. L. Princy and S. B. Rao,  {\em On Set-Valuations of Graphs}, In {\bf Labeling of Discrete Structures and Applications}, (Eds.: B D Acharya, S Arumugam and A Rosa), Narosa Publishing House, New Delhi, 2008.

\bibitem{AGKS} B. D. Acharya, K. A. Germina and K. Abhishek and P. J. Slater, {\em Some New Results on Set-Graceful and Set- Sequential Graphs}, Journal of Combinatorics, Information and System Sciences, {\bf 37}(2-4)(2012), 145-155.

\bibitem {BM1} J. A. Bondy and U. S. R. Murty, {\bf Graph Theory}, Springer, 2008.

\bibitem {BLS} A. Brandst\"{a}dt, V. B. Le and J. .P Spinrad, {\bf Graph Classes:A Survey}, SIAM, Philadelphia, 1999.

\bibitem {JAG} J. A. Gallian, {\em A Dynamic Survey of Graph Labeling}, The Electronic Journal of Combinatorics (DS-16), 2013.

\bibitem{GK1} K. A. Germina and K. Abhishek, {\em Set-Valued Graphs – I}, Journal of Fuzzy Set Valued Analysis, {\bf 2012}(2012), Article IDjfsva-00127, $17$ pages.

\bibitem{KAP} K. A. Germina, K. Abhishek, K. L. Princy, {\em Further Results On Set-Valued Graphs}, Journal of Discrete Mathematical Sciences and Cryptography {\bf 11}(5)(2008), 559-566., DOI:10.1080/09720529.2008.10698208.

\bibitem {GA} K. A. Germina and T. M. K. Anandavally, {\em Integer Additive Set-Indexers of a Graph:Sum Square Graphs}, Journal of Combinatorics, Information and System Sciences, {\bf 37}(2-4)(2012), 345-358.

\bibitem {GS1} K. A. Germina and N. K. Sudev,  {\em On Weakly Uniform Integer Additive Set-Indexers of Graphs}, International Mathematical Forum, {\bf 8}(37)(2013) 1827-34.

\bibitem {FH}  F. Harary, {\bf Graph Theory}, Addison-Wesley Publishing Company Inc., 1969.

\bibitem{SMH} S. M. Hegde, {\em On Set-Valuations of Graphs}, National Academy of Science Letters, {\bf 14}(4)(1991), 181-182.

\bibitem{AR} A. Rosa, {\em On certain valuation of the vertices of a graph}, In {\bf Theory of Graphs}, Gordon and Breach, Paris, 1967.

\bibitem {GS0} N. K. Sudev and K. A. Germina, {\em On Integer Additive Set-Indexers of Graphs}, International Journal Mathematical Sciences \& Engineering Applications, {\bf 8}(2)(2014), 11-22.

\bibitem {GS2} N. K. Sudev and K. A. Germina, {\em Some New Results on Strong Integer Additive Set-Indexers}, Discrete Mathematics, Algorithms and Applications, {\bf 7}(1)(2015) 1-11., DOI: 10.1142/S1793830914500657.

\bibitem{GS3a} N. K. Sudev, K. A. Germina  and K. P. Chithra, {\em A Creative Review on Integer Additive Set-Valued Graphs}, International Journal of Scientific \& Engineering Research, {\bf 6}(3)(2015), 372-378.

\bibitem {DBW} D. B. West, {\bf Introduction to Graph Theory}, Pearson Education Inc., 2001.

\end{thebibliography}
\end{document}